\newcommand{\so}{\operatorname{so}\nolimits}
\def\N{\mathbb{N}}
\def\R{\mathbb{R}}
\def\g{\mathfrak g}
\def\a{\alpha}
\def\lam{\lambda}
\def\f{\varphi}
\newcommand{\spann}{\operatorname{span}\nolimits}
\def\ad{\operatorname{ad}\nolimits}
\def\id{\operatorname{id}}
\def\so{\operatorname{so}}
\def\rank{\operatorname{rank}}
\def\const{\operatorname{const}} 
\def\Ad{\operatorname{Ad}} 
\renewcommand\Vec{\operatorname{Vec}} 
\def\vh{\vec{h}}
\def\vg{\vec{g}}
\newcommand{\be}[1]{\begin{equation}\label{#1}}
\newcommand{\ee}{\end{equation}}
\newcommand{\eq}[1]{$(\protect\ref{#1})$}
\newtheorem{theorem}{Theorem}
\newtheorem*{remark}{Remark}
\newtheorem{corollary}{Corollary}
\numberwithin{equation}{section}
 \author{Yu. Sachkov
\thanks{
Program Systems Institute, Pereslavl-Zalessky, Russia, 
        {\tt\small yusachkov@gmail.com}}%
}
\title{Co-adjoint orbits and time-optimal problems
\\for two-step free-nilpotent Lie groups
}
\begin{document}

\maketitle

\begin{abstract}
We describe co-adjoint orbits and Casimir functions for two-step free-nilpotent Lie algebras.
The symplectic foliation consists of affine subspaces of the Lie coalgebra of different dimensions.

Further, we consider left-invariant time-optimal problems on two-step Carnot groups, for which the set of admissible velocities is a strictly convex compactum in the first layer of the Lie algebra containing the origin in its interior. We describe integrals for the vertical subsystem of the Hamiltonian system of Pontryagin maximum principle.
Further, we describe constancy and periodicity of solutions to this subsystem and controls, and characterize its flow, for two-dimensional co-adjoint orbits.
\end{abstract}

\tableofcontents

\newpage

\section{Introduction}\label{sec:intro}

We consider linear-in-controls time-optimal left-invariant problems on step 2 Carnot groups, with a strictly convex control set. In particular, this class of problems contains sub-Riemannian \cite{mont,  notes, ABB} and sub-Finsler \cite{ber, ber2, BBLDS, ALDS, ali-charlot} problems. Our aim is to characterize extremal controls.

It is enough to consider the free-nilpotent cases since any step-2 Carnot group is a quotient of a step-2
free-nilpotent Lie group (with the same number of generators) and, moreover, every minimizing curve lifts to a
minimizing curve.
Indeed,
see Theorem 4.2 in \cite{ledonne-speight} 
for the existence of free Carnot groups;
see Corollary 2.11 in \cite{ledonne-rigot} 
for the fact that the quotient is a submetry and therefore geodesics lift to geodesics.

  For a two-step free-nilpotent Lie group, we describe the symplectic foliation and Casimir functions. The symplectic foliation consists of affine subspaces in the Lie coalgebra of dimensions 0, 2, \dots $2[k/2]$, where $k$ is the number of generators of the Lie algebra. 
	Casimir functions are all linear, except one function that is a homogeneous polynomial of order $(k+1)/2$, for odd~$k$. 
	These objects are important since Casimir functions are integrals, and symplectic leaves are invariant sets of the Hamiltonian system of Pontryagin maximum principle for left-invariant optimal control problems on Lie groups.

Further, we consider a left-invariant time-optimal problem, for which the set of admissible velocities is a strictly convex compactum in the first layer of the Lie algebra containing the origin in its interior. We apply Pontryagin maximum principle, and for the vertical subsystem of the corresponding Hamiltonian system we describe integrals. 
We characterize constancy and periodicity of solutions  to this subsystem and  describe its flow for two-dimensional co-adjoint orbits. 

This paper generalizes similar results for the number of generators $k = 2, 3$ obtained respectively by V. Berestovskii \cite{ber2} and the author \cite{sf36}. 

\medskip

This paper has the following structure. In Sec.~\ref{sec:problem} we recall the definitions of free Carnot algebras and groups, and  state the time-optimal problem. In Sec.~\ref{sec:foliation} we describe explicitly the symplectic foliation (decomposition into co-adjoint orbits) of the Lie coalgebra. In Sec.~\ref{sec:Casimir} we compute Casimir functions. Then we start to study the time-optimal problem. In Sec.~\ref{sec:PMP} we apply Pontryagin maximum principle. And in Sec.~\ref{sec:integrals} we describe integrals of the vertical subsystem of the Hamiltonian system of PMP, and prove the constancy and periodicity properties of solutions to this system for two-dimensional co-adjoint orbits. In the final Section~\ref{sec:final} we comment on results of this paper and suggest some questions for future research.

\section{Optimal control problem}\label{sec:problem}
Let $\g$  be the step 2 free-nilpotent Lie algebra with $k \geq 2$ generators:
\begin{align}
&\g = \g^{(1)} + \g^{(2)}, \nonumber& \\
&\g^{(1)} = \spann \{ X_i ~|~ i = 1, \dots, k \}, \nonumber&  \\
&\g^{(2)} = \spann \{ X_{ij} ~|~ 1 \le i < j \le k \}, \nonumber & \\
&[X_i, X_j] = X_{ij}, \quad \ad X_{ij} = 0, \quad 1 \le i < j \le k, &\label{tab1} \\
&\dim \g = k(k + 1)/2.& \nonumber 
\end{align}
Let $G$ be the connected simply connected Lie group with the Lie algebra $\g$. We will think of~$X_i, X_{ij}$ as left-invariant vector fields on $G$.

A model of vector fields $X_i, X_{ij}$  on $G \cong \mathbb{R}^{k(k+1)/2} = \{ (x_1, \dots, x_k;~x_{12}, \dots, x_{(k - 1)k}) \}$ is given by
\begin{align*}
&X_i = \frac{\partial}{\partial x_i} - \sum_{j >i} \frac{x_j}{2}  \frac{\partial}{\partial x_{ij}} + 
\sum_{j <i} \frac{x_j}{2}  \frac{\partial}{\partial x_{ji}}, \quad i = 1, \dots, k,\\
&X_{ij} = \frac{\partial}{\partial x_{ij}}, \quad 1 \le i < j \le k,
\end{align*}
here we follow 
Section 2.2 in \cite{ledonne-speight}.

Let $U \subset \mathbb{R}^k$ be a compact convex set containing the origin in its interior.
We consider the following time-optimal problem \cite{notes}: 
\begin{align}
&\dot{g} = \sum^k_{i=1} u_i X_i, \quad g \in G, \quad u = (u_1, \dots, u_k) \in U, \label{p21} &\\
&g(0)  = \id, \quad g(t_1) = g_1, \label{p22} &\\
&t_1 \to \min. \label{p23}&
\end{align}
If $U = -U$, we obtain a sub-Finsler problem  \cite{ber, ber2, BBLDS, ALDS}, and if $U$ is an ellipsoid centered at the origin, we obtain a sub-Riemannian problem \cite{mont, notes, ABB}. 

In the case $k = 2$, $G$ is the Heisenberg group, and solution to problem \eqref{p21}--\eqref{p23} was obtained by H. Busemann~\cite{buseman} and V. Berestovskii~\cite{ber2}.  The case $k = 3$ was studied in~\cite{sf36}. In the both cases $k = 2, 3$, extremal controls are constant or periodic. The main goal of this work is to generalize this result to arbitrary $k$, for two-dimensional co-adjoint orbits. 

The sub-Riemannian case $U = \{\sum_{i=1}^k u_i^2 \leq 1\}$ was first considered by R.Brockett~\cite{brockett}, and was completely solved for $k=3$ by O.Myasnichenko~\cite{myasnich}. Some partial results for $k=4$ were obtained by L. Rizzi and U. Serres~\cite{rizzi-serres}.

Existence of optimal solutions in problem \eqref{p21}--\eqref{p23} follows in a standard way  from the Rashevsky-Chow and Filippov theorems \cite{notes}.

\section{Symplectic foliation}\label{sec:foliation}
Before our study of extremals for problem \eqref{p21}--\eqref{p23}, we consider Casimirs and symplectic foliation (decomposition into coadjoint orbits) on the dual $\g^*$  of the Lie algebra $\g$ \cite{kirillov}. This is important for our study of extremals for the problem.

Let $p^0 \in \g^*$. Denote by  $S_{p^0}$ the {\em  symplectic leaf} ({\em co-adjoint orbit}\,) through the point $p^0$:
$$
S_{p^0} = \left\{ \Ad^*_{g^{-1}}(p^0) \mid g \in G\right\} \subset \g^*.
$$
We obtain an explicit description of a leaf  $S_{p^0}$ below in Th. \ref{th:fol}.

Introduce linear on fibers of  the cotangent bundle $T^*G$ Hamiltonians corresponding to the basic left-invariant vector fields on $G$:
$$ h_i (\lambda) = \langle \lambda, X_i \rangle, \quad h_{ij}(\lambda) = \langle \lambda, X_{ij} \rangle, \quad \lambda \in T^*G. $$
Product rule \eqref{tab1} for Lie bracket  implies the following multiplication table for Poisson bracket:
\be{tab2}
\{ h_i, h_j \} = h_{ij}, \quad \{ h_{ij}, h_l \} = \{ h_{ij}, h_{lm} \} = 0. 
\ee
The Hamiltonians $h_i$, $h_{ij}$ will be considered as coordinates on the dual $\g^*$ of the Lie algebra $\g$.

Notice that the {\em Poisson bivector} (i.e., the matrix of pairwise Poisson brackets of the basis Hamiltonians $h_i$, $h_{ij}$) is determined by the skew-sym\-met\-ric matrix
\be{M}
M = (h_{ij}) = 
\begin{pmatrix}
0& h_{12}& \ldots &h_{1k}\\
-h_{12}& 0 & \ldots & h_{2k}\\
\vdots& \vdots& \vdots& \vdots\\
-h_{1k}& -h_{2k}& \ldots& 0
\end{pmatrix}
\in \so (k). 
\ee
It is well known \cite{kirillov} that  $S_{p^0}$ is a symplectic manifold with
\be{dimS}
\dim S_{p^0} = \rank M_{p^0}.
\ee

In order to describe the leaves $S_{p^0}$ explicitly,  consider a linear function, 
for a vector $a = (a_1, \dots, a_k) \in \mathbb{R}^k$:
$$ I_a  =  \sum^k_{i=1} a_i h_i, \qquad I_a \ : \ \g^* \to \R. $$
Further, for any 
  $p^0 \in \g^*$  consider an affine subspace 
\be{Lp0}
L_{p^0} = \{ p \in \g^* \mid h_{ij}(p) = h_{ij}(p^0), \ I_a(p) = I_a(p^0) \ \forall a \in \ker M_{p^0}\} \subset \g^*,
\ee
where $M_{p^0} = (h_{ij}(p^0))$  is the corresponding matrix~\eq{M}.
It is obvious that
$$\dim L_{p^0}  = \rank M_{p^0},$$
thus, in view of \eq{dimS}, we have $\dim L_{p^0}= \dim S_{p^0}$. This coincidence is not accidental.

\begin{theorem}\label{th:fol}
For any  $p^0 \in \g^*$ we have $S_{p^0} = L_{p^0}$. 
Thus co-adjoint orbits are affine subspaces of $\g^*$ of dimensions $0, 2, \dots, 2[k/2]$. 
\end{theorem}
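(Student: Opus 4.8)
\medskip
\noindent\emph{Proof strategy.} The plan is to compute the co-adjoint action in closed form --- which is possible because $\g$ is two-step nilpotent --- and then to match the resulting orbit with the affine subspace $L_{p^0}$ coordinate by coordinate.

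First I would note that $(\ad X)^2 = 0$ for every $X \in \g$: writing $X = X' + X''$ with $X' \in \g^{(1)}$, $X'' \in \g^{(2)}$, one has $\ad X = \ad X'$ (the centre acts trivially) and $\ad X'$ sends $\g^{(1)}$ into $\g^{(2)}$ and kills $\g^{(2)}$, so its square vanishes. Hence $\Ad_{\exp X} = e^{\ad X} = \id + \ad X'$, and since $\exp \colon \g \to G$ is a diffeomorphism ($G$ is simply connected nilpotent), every $g \in G$ acts by $\Ad_{g^{-1}} = \id - \ad X'$ where $X'$ is the $\g^{(1)}$-part of $\exp^{-1} g$, with $X'$ ranging over all of $\g^{(1)} \cong \R^k$ as $g$ ranges over $G$. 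Dualizing, $q := \Ad^*_{g^{-1}} p^0$ satisfies $\langle q, Y \rangle = \langle p^0, Y \rangle - \langle p^0, [X', Y] \rangle$ for all $Y \in \g$.

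Next I would read off the coordinates of $q$. Testing against $X_{lm}$ and using $[X', X_{lm}] = 0$ gives $h_{lm}(q) = h_{lm}(p^0)$ (this is just the statement that the $h_{ij}$ are Casimirs). Testing against $X_l$ and using the bracket table \eqref{tab1}, with $X' = \sum_i \xi_i X_i$, gives $h_l(q) = h_l(p^0) + (M_{p^0}\xi)_l$, where $M_{p^0} = (h_{ij}(p^0))$ is the matrix \eqref{M} (with the skew convention $h_{ij} = -h_{ji}$, $h_{ii} = 0$). Therefore, letting $\xi$ run over $\R^k$,
$$
S_{p^0} = \bigl\{\, q \in \g^* \;\big|\; h_{ij}(q) = h_{ij}(p^0)\ \ (1\le i<j\le k), \quad \bigl(h_l(q)-h_l(p^0)\bigr)_{l=1}^{k} \in \operatorname{im} M_{p^0} \,\bigr\}.
$$
Finally, since $M_{p^0}$ is skew-symmetric, $\operatorname{im} M_{p^0} = (\ker M_{p^0})^{\perp}$; and the condition ``$I_a(q) = I_a(p^0)$ for all $a \in \ker M_{p^0}$'' in the definition \eqref{Lp0} of $L_{p^0}$ says exactly that the vector $(h_l(q) - h_l(p^0))_l$ is orthogonal to $\ker M_{p^0}$. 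Hence the displayed set is precisely $L_{p^0}$, so $S_{p^0} = L_{p^0}$. The dimension claim then follows: $\dim L_{p^0} = \rank M_{p^0}$, the rank of a real skew-symmetric $k \times k$ matrix is even and at most $2[k/2]$, and each value $0, 2, \dots, 2[k/2]$ is attained (e.g.\ by a $p^0$ with $M_{p^0}$ block-diagonal, built from the prescribed number of copies of $\left(\begin{smallmatrix}0 & 1\\ -1 & 0\end{smallmatrix}\right)$).

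I do not expect a genuine obstacle here; the only delicate points are the bookkeeping of signs in passing from $\ad$ to its dual and in the skew-symmetric index conventions, plus the (standard) use of $\exp$ being a global diffeomorphism to reduce the computation on $G$ to one in $\g$. An alternative, less computational route is also available: one shows $S_{p^0} \subseteq L_{p^0}$ by checking that the $h_{ij}$ are Casimirs and that $\{h_j, I_a\} = (M_p a)_j$ vanishes identically on the orbit whenever $a \in \ker M_{p^0}$ (because $M_p = M_{p^0}$ on $S_{p^0}$), and then upgrades this inclusion to equality using $\dim S_{p^0} = \rank M_{p^0} = \dim L_{p^0}$ from \eqref{dimS}, the connectedness of the affine space $L_{p^0}$, and the classical fact that co-adjoint orbits of simply connected nilpotent Lie groups are closed. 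I would, however, present the explicit computation as the main argument, since it is elementary and self-contained.
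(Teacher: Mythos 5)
Your proposal is correct, and it takes a genuinely different route from the paper. The paper only proves the inclusion $S_{p^0} \subset L_{p^0}$, by checking along flows of the basis Hamiltonians $e^{t\vh}$ that $h_{ij}$ and $I_a$, $a \in \ker M_{p^0}$, are constant (i.e.\ the relevant Poisson brackets vanish on the orbit), and then upgrades the inclusion to equality using $\dim S_{p^0} = \rank M_{p^0} = \dim L_{p^0}$ from \eqref{dimS} together with a connectedness and open--closed argument in $L_{p^0}$; this is essentially the ``alternative route'' you sketch at the end. Your main argument instead exploits two-step nilpotency to get the closed formula $\Ad_{g^{-1}} = \id - \ad X'$ (since $(\ad X)^2=0$ and $\exp$ is a global diffeomorphism), so the orbit is computed explicitly: $h_{ij}$ is unchanged and the vector $(h_l(q)-h_l(p^0))_l$ sweeps out exactly $\operatorname{im} M_{p^0}$, which equals $(\ker M_{p^0})^{\perp}$ by skew-symmetry and hence matches the defining conditions of $L_{p^0}$. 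This buys you both inclusions at once, avoids any appeal to the general fact \eqref{dimS} about orbit dimension and to the topological open/closed step, and as a bonus makes the attainability of each dimension $0,2,\dots,2[k/2]$ explicit via block-diagonal $M_{p^0}$ (a point the paper states but does not argue). The sign bookkeeping you flag is indeed the only delicate point, and your conventions ($h_{ji}=-h_{ij}$, $(M\xi)_l=\sum_i h_{li}\xi_i$) work out; in any case the sign is immaterial since $\xi$ ranges over all of $\R^k$.
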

\begin{proof}
The leaf  $S_{p^0} \subset \g^*$ is a connected smooth manifold of dimension $\rank M_{p^0}$. The affine subspace $L_{p^0} \subset \g^* $ has the same dimension.  We now  prove that $S_{p^0} \subset L_{p^0}$. 

Take any $p = \Ad^*_{g^{-1}}(p^0) \in S_{p^0}$. We have to prove that
\be{hijp}
h_{ij}(p) = h_{ij}(p^0), \qquad I_a(p) = I_a(p^0) \ \forall a \in \ker M_{p^0}.
\ee

\medskip
(1) Consider first  a special case: let
\be{geTX}
g = e^{TX}, \text{ where } X = X_i \text{ or } X_{ij}.
\ee
Then 
$$
p = \Ad^*_{e^{-TX}} (p^0) = \left(e^{-TX}\right)^*(p^0) = e^{T\vh}(p^0),
$$
where $h(\lam) = \langle \lam, X \rangle \in C^{\infty}(T^*G)$ is the linear on fibers of $T^*G$ Hamiltonian corresponding to~$X$, and  $\vh \in \Vec(\g^*)$ is the vertical part of the Hamiltonian vector field with the Hamiltonian~$h$~\cite{notes}.
Let us denote $p(t) = e^{t \vh}(p^0)$, $t \in [0, T]$, and prove~\eq{hijp}.

We have
$$
\frac{d}{dt} h_{ij}(p(t)) = \frac{d}{dt} h_{ij} \circ e^{t \vh} (p(t)) = \vh h_{ij}(p(t))  = \{h, h_{ij}\} (p(t)) = 0,
$$
thus $h_{ij}(p) = h_{ij}(p^0)$.

Further, 
\be{dI}
\frac{d}{dt} I_a (p(t)) = \frac{d}{dt} I_a \circ e^{t \vh} (p(t))  = (\vh I_a) (p(t)) = \sum_{i=1}^k a_i \{h, h_{i}\}(p(t)).
\ee
If $X = X_{jl}$, then the previous sum vanishes since $\{h_{jl}, h_i\} = 0$, see \eq{tab2}. And if $X=X_j$, then the sum~\eq{dI} is equal to 
$$
\sum_{i=1}^k a_i \{h_j, h_i\}(p(t)) = \sum_{i=1}^k a_i h_{ji}(p(t)) = (Ma)_j = 0 
$$
since $a \in \ker M_{p^0} = \ker M_{p(t)}$.

Thus $I_a(p(t)) \equiv \const$, so $I_a(p) = I_a(p^0)$. Property \eq{hijp} is proved in the special case~\eq{geTX}.

\medskip

(2) General case: any $g \in G$ can be represented as a product
$$
g = e^{T_N Y_N}  \cdots e^{T_1 Y_1}, \qquad T_i \in \R, \quad Y_i \in \{X_l, X_{lm}\}. 
$$ 
Then
$$
p = \Ad^*_{g^{-1}}(p^0) = e^{T_N \vg_N} \circ \cdots \circ e^{T_1 \vg_1}(p^0), 
$$
where $g_i(\lam) = \langle \lam, Y_i\rangle$, $\lam \in T^*G$. We use iteratively results of item (1) of this proof and obtain
\begin{align*}
&h_{ij} (p^0) = h_{ij}\circ e^{T_1 \vg_1}(p^0) = \cdots = h_{ij}\circ e^{T_N \vg_N} \circ \cdots  \circ e^{T_1 \vg_1}(p^0) = h_{ij}(p), \\
 &I_a (p^0) = I_a\circ e^{T_1 \vg_1}(p^0) = \cdots = I_a\circ e^{T_N \vg_N} \circ \cdots  \circ e^{T_1 \vg_1}(p^0) = I_a(p).
\end{align*}

We proved the inclusion $S_{p^0} \subset L_{p^0}$. Thus $S_{p^0}$ is open and closed in $L_{p^0}$. Since both $S_{p^0}$ and~$L_{p^0}$ are connected, we have $S_{p^0} = L_{p^0}$.
\end{proof}

\section{Casimir functions}\label{sec:Casimir}
Recall~\cite{kirillov}  that a {\em  Casimir function}  is a function  $h \in C^{\infty}(\g^*)$ such that
\be{}
\{h, h_i\} = \{h, h_{ij}\} = 0 \qquad \forall i, j. 
\ee
Symplectic leaves of maximal dimension are connected components of joint level surfaces of Casimir functions.

In view of \eq{tab2}, 
there are  $k(k-1)/2$  obvious Casimir functions 
\be{triv}
h_{ij}, 
\qquad 1 \leq i < j \leq k.
\ee 

\begin{theorem}
\begin{itemize}
\item[$(1)$]
If $k = 2 n$, $n \in \N$, then  there are just $k(k-1)/2$  independent Casimir functions  \eq{triv}.
\item[$(2)$]
If $k = 2 n+1$, $n \in \N$,  then, in addition to Casimir functions  \eq{triv}, there is one more independent of them:  
\begin{align}
&C(p) = I_{a(p)}(p) = \sum_{i=1}^k a_i(p) h_i(p), \label{C1} \\
\intertext{where}
&a_i = \sum_{\substack{j_l \neq i\\ j_1, \dots, j_{2n} \text{ different}}}
(-1)^{\sigma+i} h_{j_1j_2} \cdots h_{j_{2n-1}j_{2n}}, \qquad i = 1, \dots, k, \label{C2}\\
&\sigma = \text{ parity of the permutation } (j_1, \dots, j_{2n}). \label{C3}
 \end{align}
The Casimir function $C$ is a homogeneous polynomial of degree   $n+1  = (k+1)/2$  on $\g^*$.
\end{itemize}
\end{theorem}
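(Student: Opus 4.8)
The plan is to reduce the Casimir equations to a single linear-algebraic condition on the ``first-layer gradient'' $\nabla_1 h := (\partial h/\partial h_1,\dots,\partial h/\partial h_k)$. Because the Lie--Poisson bracket on $\g^*$ expands bilinearly over the coordinates $h_i$, $h_{ij}$, and because \eq{tab2} gives $\{h_l,h_{ij}\}=\{h_{lm},h_{ij}\}=0$, the equation $\{h,h_{ij}\}=0$ holds automatically for every smooth $h$; the only genuine conditions are $\{h,h_i\}=0$. Expanding once more and using \eq{tab2},
\[
\{h,h_i\}=\sum_{l=1}^k\frac{\partial h}{\partial h_l}\{h_l,h_i\}=\sum_{l=1}^k M_{li}\,\frac{\partial h}{\partial h_l}=-(M\nabla_1 h)_i ,
\]
so $h$ is a Casimir function if and only if $M(p)\,\nabla_1 h(p)=0$ for all $p\in\g^*$, i.e. $\nabla_1 h(p)\in\ker M(p)$ pointwise.

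For $k=2n$, the Pfaffian of $M$ is a nonzero polynomial in the $h_{ij}$ (it is nonzero, e.g., on a block-diagonal matrix), hence $\det M(p)=\operatorname{Pf}(M(p))^2\neq 0$ on a dense open subset of $\g^*$, where $\ker M(p)=0$. Thus $\nabla_1 h\equiv 0$ on a dense set, hence everywhere by continuity, so $h$ depends only on $h_{12},\dots,h_{(k-1)k}$. These $k(k-1)/2$ functions are functionally independent and form a complete set of Casimirs, which is assertion $(1)$.

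For $k=2n+1$, $M(p)$ is skew-symmetric of odd order, so $\det M(p)\equiv 0$ and $\operatorname{rank}M(p)\le 2n$, with equality on a dense open set (some $2n\times 2n$ sub-Pfaffian is a nonzero polynomial); there $\ker M(p)$ is one-dimensional. I claim it is spanned by the vector $a(p)$ of \eq{C2}--\eq{C3}, which is, up to sign and a combinatorial normalization factor, the tuple of signed sub-Pfaffians $a_i = (-1)^{i+1}\operatorname{Pf}(M_{\hat\imath})$, where $M_{\hat\imath}$ denotes $M$ with row and column $i$ removed. The required identity $Ma=0$ is the classical kernel property of odd skew-symmetric matrices, which I would prove by bordering: for $y\in\R^k$ set $B_y=\left(\begin{smallmatrix}0 & y^{T}\\ -y & M\end{smallmatrix}\right)$, an even skew matrix whose Pfaffian, expanded along the first row, equals $\langle y,a\rangle$ up to that same constant. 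Taking $y$ to be the $i$-th column of $M$ makes the first row of $B_y$ equal to $(-1)$ times its $(i+1)$-st row, so $\operatorname{Pf}(B_y)=0$; since $\langle(\text{$i$-th column of }M),a\rangle=(M^{T}a)_i=-(Ma)_i$, this yields $Ma=0$, and since $a(p)\neq 0$ on a dense set it spans $\ker M(p)$ there.

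It remains to check that $C=\sum_i a_i h_i=I_{a(p)}(p)$ is the advertised Casimir. Since each $a_i$ is a polynomial in the $h_{lm}$ only, $\partial C/\partial h_j=a_j$, i.e. $\nabla_1 C=a$; hence $M\nabla_1 C=Ma=0$ and $C$ is a Casimir by the first paragraph. Each $a_i$ is a sum of monomials of degree $n$ in the $h_{lm}$, so $C$ is a homogeneous polynomial of degree $n+1=(k+1)/2$ on $\g^*$. Finally, $C$ is functionally independent of $h_{12},\dots,h_{(k-1)k}$ because $\partial C/\partial h_j=a_j\not\equiv 0$, so $dC$ is not a combination of the $dh_{ij}$ at generic points; and the corank count $\dim\g-\max_p\operatorname{rank}M(p)=k(k+1)/2-2n=k(k-1)/2+1$ confirms there is exactly one further independent Casimir, which is assertion $(2)$. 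The only genuinely non-routine step is the Pfaffian identity $Ma=0$; the remainder is bookkeeping.
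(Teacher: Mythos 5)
Your proposal is correct, but it reaches the conclusion by a genuinely different route than the paper. The paper handles part (1) purely by the dimension count (generic orbits have dimension $k$, so the codimension bounds the number of independent Casimirs), and for part (2) it verifies $\{C,h_l\}=\sum_i a_i h_{il}=0$ by hand: in the sum \eq{sum} it pairs each monomial with an equal monomial of opposite sign coming from another value of the outer index, so everything cancels. You instead reduce the Casimir condition for an arbitrary smooth $h$ to the pointwise linear-algebra condition $M\,\nabla_1 h=0$ (gradient in $h_1,\dots,h_k$), identify the vector $a$ of \eq{C2}--\eq{C3} as a constant multiple of the vector of signed sub-Pfaffians of $M$, and get $Ma=0$ from the classical bordered-Pfaffian argument ($\operatorname{Pf}(B_y)$ is linear in $y$ and vanishes when $y$ is a column of $M$). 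This buys something the paper does not state: for even $k$ you show that \emph{every} Casimir is a function of the $h_{ij}$ alone, not merely that the count of independent Casimirs is $k(k-1)/2$; and for odd $k$ the formula for $C$ is explained conceptually (it is the pairing of $p|_{\g^{(1)}}$ with the kernel vector of $M$) rather than checked term by term, which also makes $\nabla_1 C=a\in\ker M$ and the functional independence transparent. The paper's combinatorial cancellation is, by contrast, self-contained and needs no Pfaffian machinery. The one step you assert rather than verify is the identification of \eq{C2}--\eq{C3} with $\pm 2^n n!\,\operatorname{Pf}(M_{\hat\imath})$; it is routine once $h_{j_1j_2}$ for $j_1>j_2$ is read as the matrix entry $-h_{j_2j_1}$ and $\sigma$ as the parity relative to the increasing order of $\{1,\dots,k\}\setminus\{i\}$, but it deserves an explicit line. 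Your completeness claim for odd $k$ (at most one extra Casimir) rests on the same codimension-of-generic-orbit fact the paper invokes, so the two arguments are on equal footing there; alternatively your own criterion $\nabla_1 h\in\ker M=\operatorname{span}(a)$ on the generic stratum would yield it directly.
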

\begin{proof}
(1) If $k = 2 n$, $n \in \N$, then, by Th. \ref{th:fol}, we have  $\dim S_{p^0} = k$ for generic $p^0 \in \g^*$, and there can be no more than $k(k-1)/2$ independent Casimir functions for dimension reason. Indeed, for generic $p^0 \in \g^*$, the codimension of $S_{p^0}$ in $\g^*$ is equal to the number of independent Casimir functions.

\medskip

(2) Let  $k = 2 n+1$, $n \in \N$. Then, by the same dimension reason, there must be one more Casimir function, independent of $h_{ij}$. Let us prove that it is the function $C$ given by \eq{C1}--\eq{C3}.

It is obvious that $\{C, h_{ij}\} = 0$ in view of \eq{tab2}.

Since $\{C,h_l\} = \sum_{i=1}^k a_i h_{il}$, it remains to prove that $S := \sum_{i=1}^k a_i h_{il} = 0$ for any $l = 1, \dots, k$.

Let $l = 1$ (the rest cases $l = 2, \dots, k$ are considered similarly). Then
\be{sum}
S = \sum_{i=1}^k a_i h_{i1} = \sum_{i=2}^k 
 \sum_{\substack{j_l \neq i\\ j_1, \dots, j_{2n} \text{ different}}}
(-1)^{\sigma+i} h_{j_1j_2} \cdots h_{j_{2n-1}j_{2n}}
h_{i1}.
\ee
Fix any $z  \in \{2, \dots, k\}$. 
The term of $S$ in \eq{sum} for $i = z$ contains a term for 
\be{ind1}
(j_1, j_2, \dots, j_{2m-1}, j_{2m}, \dots, j_{2n-1}, j_{2n})
\ee
 with $j_{2m-1} = 1$ or $j_{2m} = 1$ for some $m \in \{1, \dots n\}$. Let $j_{2m-1} = 1$ (the case $j_{2m} = 1$ is considered similarly). Then $j_{2m} \neq z$. 

Further, the term of $S$ in \eq{sum} for $i = j_{2m}$ contains a  term for 
\be{ind2}
(j'_1, j'_2, \dots, j'_{2m'-1}, j'_{2m'}, \dots, j'_{2n-1}, j'_{2n})
\ee 
with $(j'_{2m'-1}, j'_{2m}) = (z,1)$  (or $(j'_{2m'-1}, j'_{2m}) = (1,z)$, which is analogous)  for some $m' \in \{1, \dots n\}$.
It is easy to see that to each term in sum \eq{sum} with indices \eq{ind1} there corresponds the same term in \eq{sum} with indices \eq{ind2}, with the opposite sign. Thus $S= 0$. 
\end{proof}

\section{Pontryagin maximum principle}\label{sec:PMP}
We apply Pontryagin maximum principle  (PMP) in invariant form \cite{notes} to problem \eqref{p21}--\eqref{p23}. The control-dependent Hamiltonian for this problem is $\sum^k_{i=1} u_i h_i (\lambda)$, $\lambda \in T^*G$. The Hamiltonian system of PMP
reads
\begin{align}
&\dot{h}_i = -\sum^k_{j=1} u_j h_{ij}, \quad i = 1, \dots, k, \label{Ham1}& \\
&\dot{h}_{ij} = 0, \quad 1 \le i < j \le k, \label{Ham2}&\\
&\dot{q} = \sum^k_{i=1} u_i X_i, \label{Ham3}&
\end{align}
and the maximality condition of PMP is 
\begin{gather} \label{max}
\sum^k_{i=1} u_i (t) h_i (\lambda_t) = \underset{v \in U}{\mathrm{max}} \sum^k_{i=1} v_i h_i (\lambda_t) = H(h(\lambda_t)), 
\end{gather}
where
$$ H (h_1, \dots, h_k) := \underset{v \in U}{\mathrm{max}} \sum^k_{i=1} v_i h_i$$
is the support function of the set $U$~\cite{rock}. The function $H$ is convex, positive homogeneous, and continuous.
Along extremal trajectories we have $H \equiv \const \ge 0.$ 

The abnormal case 
$ H \equiv 0 \Leftrightarrow h_1 = \dots = h_k \equiv 0 $
can be omitted since the distribution  $\Delta = \spann(X_1, \dots, X_k)$ satisfies the condition $\Delta^2 = \Delta + [\Delta, \Delta] = TG$, thus by Goh condition \cite{notes} all locally optimal abnormal trajectories are simultaneously normal.

So we consider the normal case: $H \equiv \const > 0$. In view of homogeneity of the vertical part
\eqref{Ham1}, \eqref{Ham2} of the Hamiltonian system of PMP, we will assume that $H \equiv 1$ along extremal trajectories.

From now on we suppose additionally that the set $U$ is {\em strictly convex}. Then the maximized Hamiltonian $H$ is $C^1$-smooth on $\mathbb R^k \setminus \{0\}$, and maximum in \eqref{max} is attained at the control $u = \nabla H = (\partial H/\partial h_1, \dots, \partial H / \partial h_k)$ \cite{rock}. Denote $H_i = \partial H / \partial h_i$, $i = 1, \dots, k$. Then the vertical subsystem \eq{Ham1}, \eq{Ham2} of the Hamiltonian system of PMP reads as follows:
\begin{gather} \label{Hamax}
\dot{h}_i = -\sum^k_{j=1} h_{ij} H_j, \quad \dot{h}_{ij} = 0, \quad 1 \le i < j \le k, 
\end{gather}
where $H_j = H_j(h_1, \dots, h_k, h_{12}, \dots, h_{(k-1)k})$ are continuous functions.
In vector notation, this system reads as
\be{dotp}
\dot p = - M \nabla H(p), \qquad p \in \g^* \cap \{ h_{ij} \equiv \const\}.
\ee

\section{Integrals and solutions of Hamiltonian system}\label{sec:integrals}

\begin{theorem}\label{th:integral}
System \eq{Hamax}  has the following integrals:
\begin{itemize}
\item[$(1)$]
Casimir functions $h_{ij}$, $1 \leq i < j \leq k$,  and the Hamiltonian $H$,  for  $k = 2n$, $n \in \N$,
\item[$(2)$]
Casimir functions $h_{ij}$, $1 \leq i < j \leq k$,  and  $C$, and the Hamiltonian $H$,  for  $k = 2n + 1$, $n \in \N$.
\end{itemize}

For any $p^0 \in \g^*$, the symplectic leaf $S_{p^0}$ is an invariant set of system   \eq{Hamax}. In particular, any function   $I_a$, $a \in \ker M_{p^0}$, is constant on solutions of this system.  
\end{theorem}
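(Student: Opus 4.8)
The plan is to obtain all the integrals by differentiating them along the flow of \eqref{Hamax}, using that the right-hand side of \eqref{Hamax}--\eqref{dotp} has the Lie--Poisson form $\dot p=-M\nabla H(p)$ with the skew-symmetric matrix $M\in\so(k)$ of \eqref{M}, and then to read off invariance of $S_{p^0}$ from the explicit description $S_{p^0}=L_{p^0}$ furnished by Theorem~\ref{th:fol}. Throughout I extend the notation $h_{ij}$ to all indices by $h_{ij}=-h_{ji}$, $h_{ii}=0$, so that $(h_{ij})$ is exactly the matrix $M$ and \eqref{Hamax} reads $\dot h_i=-(M\nabla H)_i$, $\dot h_{ij}=0$.

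First I would record the general differentiation formula: for any $C^1$ function $f=f(h_1,\dots,h_k,h_{12},\dots,h_{(k-1)k})$ and any solution $p(t)$ of \eqref{Hamax}, since $\dot h_{ij}\equiv0$,
\[
\frac{d}{dt}\,f(p(t))=\sum_{i=1}^k\frac{\partial f}{\partial h_i}\,\dot h_i=-\sum_{i,j=1}^k h_{ij}\,\frac{\partial f}{\partial h_i}\,H_j .
\]
Taking $f=h_{lm}$ gives $\dot h_{lm}=0$; taking $f=H$ gives $-\sum_{i,j}h_{ij}H_iH_j=0$, because $M$ is skew-symmetric (here $H$ is only $C^1$ on $\R^k\setminus\{0\}$, which is precisely why I argue with $\nabla H$ directly and avoid Poisson brackets of $H$). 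For $k=2n+1$ I would take $f=C$: by the previous theorem $C$ is a Casimir function, so $\sum_i h_{ij}\,\partial C/\partial h_i=\{C,h_j\}=0$ for every $j$ (the derivatives $\partial C/\partial h_{lm}$ do not enter since $\{h_{lm},h_j\}=0$), whence $\dot C=-\sum_j H_j\{C,h_j\}=0$. This settles parts (1) and (2). One may remark in passing that this is no accident: \eqref{Hamax} is, up to sign, the Lie--Poisson Hamiltonian system of $H$, and Casimirs together with the Hamiltonian are always first integrals of such a system; but the direct computation keeps the argument self-contained and accommodates the low regularity of $H$.

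For the last assertion, recall from Theorem~\ref{th:fol} that $S_{p^0}=L_{p^0}$ is the affine subspace \eqref{Lp0} defined by $h_{ij}=h_{ij}(p^0)$ and $I_a=I_a(p^0)$ for all $a\in\ker M_{p^0}$. Let $p(t)$ be a solution of \eqref{Hamax} with $p(0)\in S_{p^0}$. Since $h_{ij}(p(t))\equiv h_{ij}(p(0))=h_{ij}(p^0)$, the matrix $M_{p(t)}=M_{p^0}$ is constant, and in particular $\ker M_{p(t)}=\ker M_{p^0}$ for all $t$. Fixing $a\in\ker M_{p^0}$ and differentiating $I_a=\sum_i a_i h_i$,
\[
\frac{d}{dt}\,I_a(p(t))=\sum_{i=1}^k a_i\,\dot h_i=-\langle a,\,M_{p^0}\nabla H(p(t))\rangle=-\langle M_{p^0}^{\top}a,\,\nabla H(p(t))\rangle=\langle M_{p^0}a,\,\nabla H(p(t))\rangle=0 ,
\]
so $I_a(p(t))\equiv I_a(p(0))=I_a(p^0)$ and hence $p(t)\in L_{p^0}=S_{p^0}$ for all $t$; this proves both that $S_{p^0}$ is an invariant set and, applied with $p^0=p(0)$, that each $I_a$ with $a\in\ker M_{p^0}$ is constant along solutions. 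I do not expect any serious obstacle here: the only points requiring care are the index convention for $h_{ij}$ with $i\ge j$, the $C^1$ (rather than $C^\infty$) regularity of $H$, and the observation that local existence of solutions of \eqref{Hamax} follows from Peano's theorem since its right-hand side is continuous — uniqueness is not needed for the invariance statement.
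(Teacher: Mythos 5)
Your proof is correct, but it proceeds differently from the paper's. The paper disposes of the theorem by citation: it invokes the standard facts that Casimir functions and the Hamiltonian are integrals of the vertical (Lie--Poisson) subsystem and that its trajectories stay in symplectic leaves, and then obtains the constancy of $I_a$, $a \in \ker M_{p^0}$, as a consequence of leaf invariance combined with Theorem~\ref{th:fol} ($S_{p^0}=L_{p^0}$). You instead verify everything by direct differentiation along solutions: $\dot h_{ij}=0$ is immediate, $\dot H=-\langle M\nabla H,\nabla H\rangle=0$ by skew-symmetry, $\dot C=-\sum_j H_j\{C,h_j\}=0$ by the Casimir property established in the preceding theorem, and $\dot I_a=\langle M_{p^0}a,\nabla H\rangle=0$ for $a\in\ker M_{p^0}$; you then read off the invariance of $S_{p^0}$ from $S_{p^0}=L_{p^0}$, i.e.\ you run the last implication in the opposite direction from the paper. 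What your route buys is self-containedness and an explicit accommodation of the low regularity of $H$ (only $C^1$ away from the origin, with merely continuous right-hand side in \eqref{Hamax}), which the textbook statements the paper cites tacitly assume away; what the paper's route buys is brevity and placement of the result within the standard Lie--Poisson framework. Your side remarks are also sound: solutions are $C^1$ so the chain rule applies, and uniqueness is indeed not needed for invariance.
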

\begin{proof}
It is well known that all Casimir functions and the Hamiltonian $H$ are integrals of the vertical subsystem  \eq{Hamax} of the Hamiltonian system corresponding to $H$ \cite{jurd}.
It is also a common knowledge that trajectories of this subsystem leave symplectic leaves invariant.  Finally, the constancy of $I_a$,  $a \in \ker M_{p^0}$, on these trajectories follows from Theorem \ref{th:fol} and the definition~\eq{Lp0} of the space $L_{p^0}$.
\end{proof}

\begin{remark}
If  $M = 0$, then solutions to system  \eq{Hamax} are constant, the corresponding normal extremal controls are constant and optimal.
\end{remark}

\begin{theorem}\label{th:p(t)}
Let $L$ be a step-$2$ free-nilpotent  Carnot algebra.
Suppose that $U$ is strictly convex and compact, and contains the origin in its interior.
Let $p^0 \in H^{-1}(1)$ and $\rank M_{p^0} = 2$.
Then the solution 
  $p(t)$ of system \eq{Hamax}  with the initial condition  $p(0) = p^0$  exists and is unique for any   $t \in \R$.
\begin{itemize}
\item[$(1)$]
If $\nabla H(p^0) \in \ker M$, then  $p(t) \equiv p^0$.
The corresponding normal extremal control 
 $u(t)$  is constant and optimal.
 \item[$(2)$]
If   $\nabla H(p^0) \not\in \ker M$, then  $p(t)$ is a strictly convex planar periodic regular $C^1$-smooth curve. The corresponding normal extremal control 
 $u(t)$  is periodic and continuous.
\end{itemize}
\end{theorem}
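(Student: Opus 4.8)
The plan is to reduce \eqref{Hamax} to a one‑dimensional autonomous equation along a convex curve and to read the geometry of that curve off convex duality. First, since the $h_{ij}$ are first integrals of \eqref{Hamax}, the skew matrix $M:=M_{p^0}$ is constant along any solution, so the vertical subsystem is just $\dot p=-M\,\nabla H(p)$ (cf.\ \eqref{dotp}) on the first‑layer coordinates $p=(h_1,\dots,h_k)\in\R^k$, with the $h_{ij}$ frozen in $M$. Strict convexity and compactness of $U$ make $H$ of class $C^1$ on $\R^k\setminus\{0\}$, and $p\neq0$ on $\{H=1\}$, so $-M\nabla H$ is continuous there and Peano gives local existence. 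By Theorem~\ref{th:integral}, $H$ and every $I_a$ ($a\in\ker M$) stay constant along solutions, so any solution from $p^0$ remains on
$$\Gamma:=\Pi\cap\{H=1\},\qquad\Pi:=p^0+V,\quad V:=\operatorname{im}M,$$
where $\Pi$ is the affine $2$‑plane ($\rank M_{p^0}=2$) carrying $S_{p^0}$ (Theorem~\ref{th:fol}). Since $V=(\ker M)^{\perp}$, the alternative (1)/(2) is precisely whether $\Pi$ is contained in the supporting hyperplane of the convex body $U^{\circ}:=\{H\le1\}$ at $p^0$: indeed $\nabla H(p^0)\in\ker M\iff\nabla H(p^0)\perp V\iff\Pi\subset\{\langle\nabla H(p^0),\cdot\,\rangle=1\}$.

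In Case (1) I would argue as follows. Here $\Pi\cap U^{\circ}=\Gamma\subset\partial U^{\circ}$ lies in one supporting hyperplane $Q$ of $U^{\circ}$; strict convexity of $U$ makes $U^{\circ}$ smooth, so $Q$ is the \emph{unique} supporting hyperplane of $U^{\circ}$ at every point of $\Gamma$, whence $\nabla H\equiv\nabla H(p^0)$ and $M\nabla H\equiv0$ on $\Gamma$. Thus the equation on $\Gamma$ is $\dot p=0$, so $p(t)\equiv p^0$ is its unique global solution. The control $u\equiv\nabla H(p^0)\in\partial U$ yields the one‑parameter subgroup $g(t)=\exp\!\left(t\sum_i H_i(p^0)X_i\right)$, whose $\g^{(1)}$‑projection is the segment $t\mapsto t\,\nabla H(p^0)$ traversed at unit speed in the Minkowski gauge $\rho_U$ of $U$; since for any admissible curve the $\rho_U$‑gauge of the $\g^{(1)}$‑projection of its endpoint is a lower bound for the elapsed time (sublinearity of $\rho_U$), this control is time‑optimal.

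In Case (2), moving from $p^0$ inside $\Pi$ along $-\pi_V\nabla H(p^0)\neq0$ strictly decreases $H$, so $\Pi$ meets $\operatorname{int}U^{\circ}$; hence $K:=\Pi\cap U^{\circ}$ is a planar convex body, $\Gamma=\partial_\Pi K=\Pi\cap\{H=1\}$ is a convex Jordan curve, and (strict convexity of $U$) it is strictly convex. A $2$‑plane through $\operatorname{int}U^{\circ}$ cannot lie in a supporting hyperplane of $U^{\circ}$ at any point of $\Gamma$, so the gradient $\pi_V\nabla H$ of $H|_{\Pi}$ never vanishes on $\Gamma$; thus $\Gamma$ is a $C^1$ regular curve and $-M\nabla H$ is a continuous, nowhere‑zero field tangent to $\Gamma$. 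Parametrizing $\Gamma$ by arc length $s\in\R/L\mathbb{Z}$ (a $C^1$ parametrization, $L=\operatorname{length}(\Gamma)$) and orienting $\Gamma$ along the field, the equation becomes $\dot s=f(s)$ with $f$ continuous and $f>0$, which I would solve by quadrature: $G(s):=\int_0^s d\sigma/f(\sigma)$ is a $C^1$ diffeomorphism $\R/L\mathbb{Z}\to\R/T\mathbb{Z}$ with $G'=1/f$ and $T:=\oint_\Gamma ds/f>0$, and any solution obeys $\tfrac{d}{dt}G(s(t))=1$, hence $s(t)=G^{-1}(G(s_0)+t)$ — existing for all $t$, unique, and $T$‑periodic. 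As every solution of \eqref{Hamax} from $p^0$ stays on the compact set $\Gamma$, it is global, equals $c(G^{-1}(G(s_0)+t))$ with $c$ the arc‑length map, and is a $C^1$, regular, $T$‑periodic parametrization of the strictly convex planar curve $\Gamma$; then $u(t)=\nabla H(p(t))$ is continuous and $T$‑periodic.

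The main obstacle will be uniqueness: the field $-M\nabla H$ is only continuous, so neither uniqueness nor the exact periodicity statement is handed over by Cauchy--Lipschitz. This is circumvented by using the Casimirs $h_{ij}$ and $C$, the Hamiltonian $H$, and the linear integrals $I_a$ to trap every solution on the one‑dimensional invariant set $\Gamma$, where the system degenerates to the scalar, sign‑definite ODE $\dot s=f(s)$, solvable by separation of variables; and, in Case (1), by the supplementary observation — resting on smoothness of $U^{\circ}$, i.e.\ strict convexity of $U$ — that $M\nabla H$ vanishes not merely at $p^0$ but identically on $\Gamma$, which is what really forces $p(t)\equiv p^0$.
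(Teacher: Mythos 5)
Your argument is correct and follows the same skeleton as the paper's proof: use the integrals $h_{ij}$, $H$ and $I_a$ ($a\in\ker M_{p^0}$) to trap every solution on the curve $\Gamma=S_{p^0}\cap H^{-1}(1)$, observe that the field $-M\nabla H$ vanishes identically on $\Gamma$ in case (1) and is nowhere zero and tangent to $\Gamma$ in case (2), and in case (2) reduce to a scalar equation $\dot s=f(s)$ with $f$ continuous and nonvanishing, solved by separation of variables, which yields global existence, uniqueness and periodicity simultaneously. The differences are local: for case (1) the paper introduces the auxiliary problem $\min H|_{S_{p^0}}$ and a Lagrange-multiplier characterization of its minima ($H(p)=H_{\min}\Leftrightarrow\nabla H(p)\in\ker M$ or $H_{\min}=0$), concluding from $H(p(t))\equiv H_{\min}$ that $\dot p\equiv 0$, whereas you argue statically that $\Pi$ lies in the supporting hyperplane of $U^\circ$ at $p^0$ and that $C^1$-smoothness of $\partial U^\circ$ forces $\nabla H\equiv\nabla H(p^0)\in\ker M$ on all of $\Gamma$; these are two phrasings of the same convexity fact. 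You additionally prove optimality of the constant control via the Minkowski gauge of $U$, which the paper only asserts, and you obtain the period directly from the quadrature $T=\oint_\Gamma ds/f$ instead of the paper's return-time-plus-uniqueness argument. One caveat, which you share with the paper (it calls $\Gamma$ ``obviously strictly convex''): strict convexity of $\Gamma$ is not actually established, and your parenthetical justification ``strict convexity of $U$'' points in the wrong duality direction --- strict convexity of $U$ gives smoothness of $U^\circ=\{H\le 1\}$, not its strict convexity; if $U$ has corner points, $\partial U^\circ$ contains flat faces (as the paper's own remark after Corollary~\ref{cor:phase} acknowledges), and the planar section $\Gamma$ may contain straight segments on which $\nabla H$ is constant but not in $\ker M$. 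This does not affect existence, uniqueness, periodicity, regularity of $p(t)$ or continuity and periodicity of $u(t)$, but the adjective ``strictly'' requires either an additional smoothness hypothesis on $U$ or should be weakened to ``convex'' --- a point to flag rather than a defect specific to your proof.
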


\begin{remark}
Notice that uniqueness of solutions to problem \eq{Hamax} is not immediate since the right-hand side of 
\eq{Hamax} is just continuous, but not $C^1$ or Lipschitzian.
Moreover, for nonstrictly convex $U$ (e.g. for $U$ polygon) we have non-uniqueness of solutions to Cauchy problem, when singular trajectories join bang-bang ones.
\end{remark}

We prove Th. \ref{th:p(t)}.

\begin{proof}


Consider any solution $p(t)$ to system \eq{Hamax}  with the initial condition  $p(0) = p^0$.
Such a solution exists and is defined for all $t \in \R$ since the level surface $H^{-1}(1)$ is compact.
 
Introduce an auxiliary convex optimization problem
\be{opt}
H(p) \to \min, \qquad p \in  S_{p^0}.
\ee
Since $H$ is continuous and nonnegative, and $\lim_{p\to \infty} H(p) = + \infty$, problem \eq{opt} has a solution:
$$
\exists \ H_{\min} = \min H|_{S_{p^0}} \geq 0.
$$
By Lagrange multipliers rule, a necessary and sufficient condition of minimum for problem \eq{opt} is as follows:
$$
H(p) = H_{\min} \ \Leftrightarrow \ \nabla H(p) \in \ker M \text{ or } H_{\min} = 0.
$$

\medskip

(1) Let $\nabla H(p^0) \in \ker M$. Then $H(p^0) = H_{\min}$, thus $H(p(t)) \equiv H(p^0) = H_{\min} = 1$. So $\nabla H(p(t)) \in \ker M$. 
Consequently, $\dot p(t) = - M \nabla H(p(t)) \equiv 0$, thus $p(t) \equiv p^0$.

\medskip

(2) Let $\nabla H(p^0) \not\in \ker M$.
Then $H(p^0) > H_{\min}$.

Consider a curve
$$
\Gamma  = \{ p \in \g^* \mid H(p) = H(p^0)\} \cap S_{p^0},
$$
it is obviously strictly convex, compact and planar. Moreover, for any $p \in \Gamma$ we have
$$
H(p) = H(p^0) > H_{\min} \ \Rightarrow \ \nabla H(p) \not\in \ker M.
$$
Thus $\Gamma$ is a regular $C^1$-smooth curve diffeomorphic to $S^1$.

Choose coordinates in $\g^* \cap \{h_{ij} = h_{ij}(p^0) \} \cong \mathbb R_{h_1, \dots, h_k}^k$ such that $S_{p^0} = \{ (h_3, \dots, h_{k}) = \const \}$ (we keep the old notation $(h_1, \dots, h_k)$ for the new coordinates). Parametrize the curve $\Gamma$ as follows: 
$$
h_1 = f_1(\f), \quad h_2 = f_2(\f), \quad (h_3, \dots, h_{k}) \equiv \const, \qquad \f \in S^1,
$$
where $f_1, f_2 \in C(S^1)$. In this parametrization ODE~\eqref{Hamax} reads as
\be{dotf}
\dot \f = f(\f), \qquad \f \in S^1,
\ee
where $f \in C(S^1)$ and $f(\f) \neq 0$ for all $\f \in S^1$. ODE~\eqref{dotf} can uniquely be solved for any initial data by separation of variables, thus it has a unique solution $\f(t) \in C^1(S^1)$ for any Cauchy problem $\f(0) = \f^0$. Thus ODE~\eqref{Hamax} has also a unique solution $h(t) \in C^1$ for the Cauchy problem $h(0) = h^0$.

Further, $p(t) \in \Gamma$ and $\dot p(t) = -M \nabla H(p(t)) \neq 0$ for all $t$, thus there exists $T > 0$ such that $p(T) = h^0$. By uniqueness of $p(t)$, it is $T$-periodic.
\end{proof}

\begin{corollary}\label{cor:phase}
Let $L$ be a step-$2$ free-nilpotent  Carnot algebra.
Suppose that $U$ is strictly convex and compact, and contains the origin in its interior.
Let $p_0 \in \g^*$, and let  $\dim S_{p^0}= 2$. 
Then the system \eq{Hamax} has the following phase portrait on the leaf $S_{p^0}$, see Figs.~$\ref{fig:ellipses}$--$\ref{fig:ellipses3}$.
There is a closed convex nonempty subset $D_0 \subset S_{p^0}$ such that:
\begin{itemize}
\item[$(1)$]
any solution $p(t)$ to \eq{Hamax} with initial condition  $p(0) \in D_0$ is constant,
\item[$(2)$]
any solution $p(t)$ to \eq{Hamax} with initial condition  $p(0) \in S_{p^0} \setminus D_0$ is periodic and encircles the set $D^0$.
\end{itemize}
\end{corollary}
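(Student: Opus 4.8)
The plan is to recover the whole phase portrait on the two-dimensional affine leaf $S_{p^0}$ by running the argument of Theorem~\ref{th:p(t)} at \emph{every} point of $S_{p^0}$, not only at points of $H^{-1}(1)$, and then to package the outcome via convexity of the support function $H$. Since the $h_{ij}$ are constant on $S_{p^0}$ and along all trajectories of \eqref{Hamax}, throughout the argument $M$ denotes the fixed rank-$2$ skew-symmetric matrix $M_{p^0}$, so that $S_{p^0}$ is an affine $2$-plane, invariant under the flow of \eqref{Hamax} by Theorem~\ref{th:integral}, on which \eqref{Hamax} reads $\dot p=-M\nabla H(p)$.

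First I would set
\[
H_{\min}=\min_{p\in S_{p^0}}H(p),\qquad D_0=\{\,p\in S_{p^0}\mid H(p)=H_{\min}\,\}.
\]
Because $0\in\operatorname{int}U$ we have $H(p)\ge\varepsilon\,\|(h_1,\dots,h_k)\|$ for some $\varepsilon>0$, and since the $h_{ij}$ are frozen on $S_{p^0}$ this makes $H|_{S_{p^0}}$ coercive; hence the minimum is attained, $D_0$ is a nonempty compact set, and it is convex as a sublevel set of the convex function $H|_{S_{p^0}}$. The Lagrange-multiplier computation already used in the proof of Theorem~\ref{th:p(t)} shows that, off the locus $\{h_1=\dots=h_k=0\}$ where $H$ fails to be differentiable, a point $p\in S_{p^0}$ lies in $D_0$ if and only if $\nabla H(p)\in\ker M$, i.e.\ if and only if $p$ is an equilibrium of \eqref{Hamax}; and if the point $\{h_1=\dots=h_k=0\}$ happens to lie on $S_{p^0}$, then there $H=0=H_{\min}$, so it is the unique minimizer in that case.

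Then I would split into two cases. If $p(0)\in D_0$: by Theorem~\ref{th:integral} the Hamiltonian $H$ is an integral, so $H(p(t))\equiv H_{\min}$, hence $p(t)\in D_0$ for all $t$, hence $\dot p(t)=-M\nabla H(p(t))\equiv0$, so $p(t)\equiv p(0)$ and this is the only solution through $p(0)$; this proves (1). If $p(0)\in S_{p^0}\setminus D_0$, put $c=H(p(0))>H_{\min}\ge0$; then $H$ is $C^1$ near the level set $\{H=c\}$ and $\nabla H(p(0))\notin\ker M$, since a critical point of the convex function $H|_{S_{p^0}}$ would be its global minimum. From here item~(2) of the proof of Theorem~\ref{th:p(t)} applies word for word — it uses only that $H(p(0))$ is constant, not the normalization $H\equiv1$ — with the curve $\Gamma_c=\{\,p\in S_{p^0}\mid H(p)=c\,\}$ in place of $\Gamma$: the trajectory stays on $\Gamma_c$, which is the boundary of the compact convex body $\{\,p\in S_{p^0}\mid H(p)\le c\,\}$ (with nonempty interior, as $H_{\min}<c$) and hence a regular $C^1$ Jordan curve carrying the continuous tangent field $-M\nabla H$, nonvanishing on $\Gamma_c$ because $D_0$ is disjoint from $\Gamma_c$; separating variables in the induced scalar ODE on $\Gamma_c\cong S^1$ yields a unique solution that traverses $\Gamma_c$ periodically. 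Finally $H_{\min}<c$ gives $D_0\subset\{H<c\}=\operatorname{int}\{H\le c\}$, so by the Jordan curve theorem in the plane $S_{p^0}$ the orbit $\Gamma_c$ encircles $D_0$; this proves (2).

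I do not expect a genuine obstacle, since the statement is in essence a repackaging of Theorem~\ref{th:p(t)}. The two points that need care are: (a) the two facts about $H|_{S_{p^0}}$ used above — coercivity (from $0\in\operatorname{int}U$) and convexity — which together guarantee that $D_0$ is nonempty, compact, convex, and equal to the equilibrium set of \eqref{Hamax}; and (b) the possible abnormal point $\{h_1=\dots=h_k=0\}\subset S_{p^0}$, at which the right-hand side of \eqref{Hamax} is merely bounded but not continuous, so that its being an equilibrium has to be deduced from the integral $H\equiv0$ rather than from the differential equation.
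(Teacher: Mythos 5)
Your proposal is correct and follows essentially the same route as the paper: the paper's entire proof is the one-line identification $D_0=\{p\in S_{p^0}\mid \nabla H(p)\in\ker M_p\}$, leaving the rest to the argument of Theorem~\ref{th:p(t)}, and your definition of $D_0$ as the minimum set of $H|_{S_{p^0}}$ coincides with it by exactly the Lagrange-multiplier equivalence already used inside that theorem's proof. Your write-up merely fills in the details the paper leaves implicit (convexity, compactness and nonemptiness of $D_0$, the level sets $\Gamma_c$ with $c\neq 1$, the possible nondifferentiability point $h_1=\dots=h_k=0$, and the ``encircles'' claim), all in the spirit of the original argument.
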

\begin{proof}
$D_0 = \{p \in  S_{p^0} \mid \nabla H(p) \in \ker M_p\}$.
\end{proof}

\begin{figure}[htbp]
\includegraphics[width=0.3\textwidth]{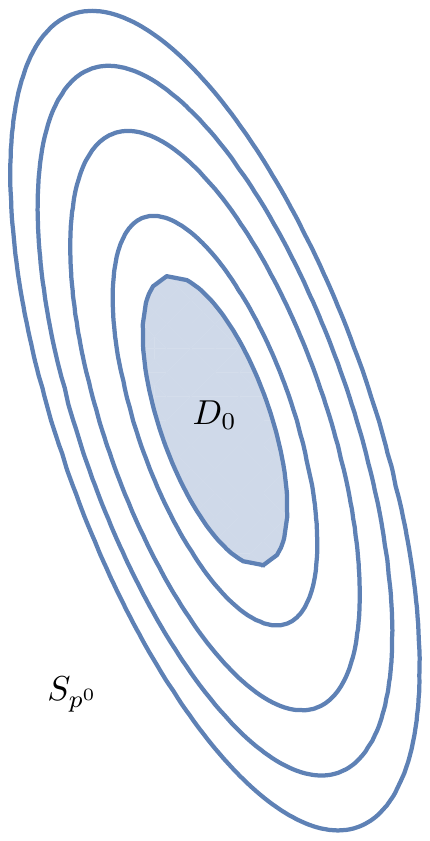}
\hfill
\includegraphics[width=0.3\textwidth]{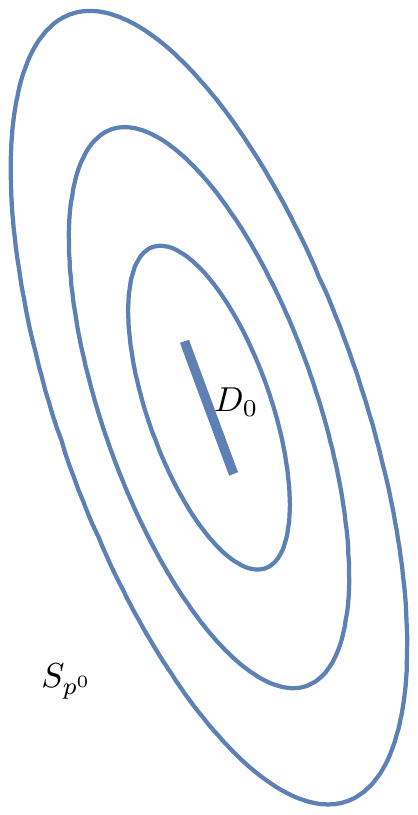}
\hfill
\includegraphics[width=0.3\textwidth]{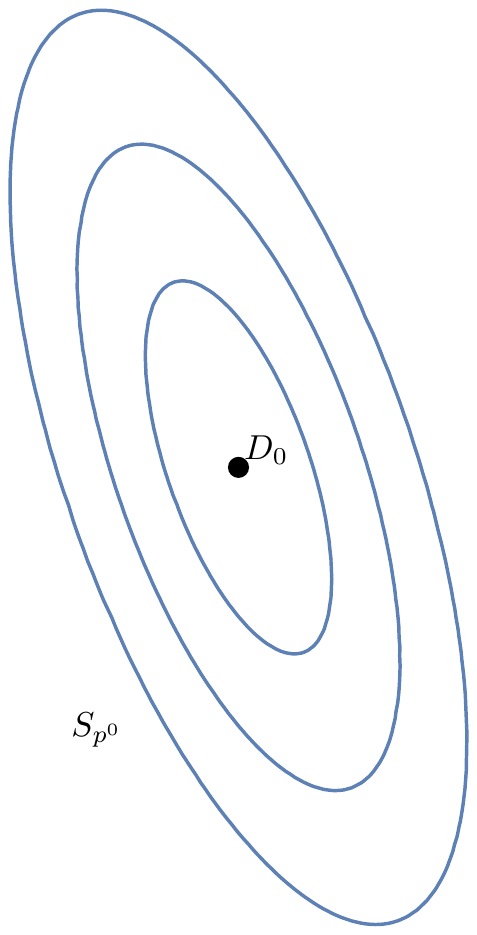}
\\
\parbox[t]{0.3\textwidth}{\caption{Phase portrait of system \eq{Hamax}  on the leaf $S_{p^0}$ with $\dim D_0 = 2$}\label{fig:ellipses}}
\hfill
\parbox[t]{0.3\textwidth}{\caption{Phase portrait of system \eq{Hamax}  on the leaf $S_{p^0}$  with $\dim D_0 = 1$}\label{fig:ellipses2}}
\hfill
\parbox[t]{0.3\textwidth}{\caption{Phase portrait of system \eq{Hamax}  on the leaf $S_{p^0}$  with $\dim D_0 = 0$}\label{fig:ellipses3}}
\end{figure}

\begin{remark}
Unlike the set $U$, its polar set $U^{\circ} = \{ p \in \g^* \mid H(p) \leq 1\}$ is not {\em strictly} convex, thus its boundary may have faces of various dimensions. If such a face  intersects a leaf $S_{p^0}$ by a set of dimension $2$, $1$, or $0$, then the set $D_0$ from Cor. $\ref{cor:phase}$ coincides with  the intersection of the face  with $S_{p^0}$ of the same dimension $2$, $1$, or $0$, see resp. Figures $\ref{fig:ellipses}$, $\ref{fig:ellipses2}$, or $\ref{fig:ellipses3}$.
\end{remark}

\begin{corollary}
Let $\g$ be a step-$2$ Carnot algebra.
  Suppose that $U$ is strictly convex and compact, and contains the origin in its interior.
Let $p^0 \in H^{-1}(1)$ and $\rank M_{p^0} = 2$. Then the corresponding normal extremal control for problem \eq{p21}--\eq{p23} is constant or periodic  and continuous.
\end{corollary}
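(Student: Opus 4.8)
The plan is to reduce the statement to the free-nilpotent case already settled in Theorem~\ref{th:p(t)}, and then to read the extremal control off the solution of the vertical subsystem.

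First I would set up the reduction. Every step-$2$ Carnot algebra $\g$ with $k$ generators is a quotient $\g=\widetilde\g/\mathfrak z$ of the free step-$2$ nilpotent Lie algebra $\widetilde\g$ with $k$ generators by a central ideal $\mathfrak z\subset\widetilde\g^{(2)}$; let $\pi\colon\widetilde\g\to\g$ be the projection, taking the generators of $\widetilde\g$ to those of $\g$. Dually, $\pi^*$ identifies $\g^*$ with the annihilator $\mathfrak z^{\perp}\subset\widetilde\g^{*}$, which is $\Ad^*$-invariant because $\mathfrak z$ is an ideal; under this identification, first-layer coordinates and the skew matrix $M$ are preserved, since $\langle\pi^*p,\widetilde X_{ij}\rangle=\langle p,[X_i,X_j]_{\g}\rangle$ and $\pi$ fixes the generators. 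Hence, for $p^0\in H^{-1}(1)$ with $\rank M_{p^0}=2$, the point $\widetilde p^{\,0}:=\pi^*p^0\in\widetilde\g^*$ again satisfies $\widetilde p^{\,0}\in H^{-1}(1)$ and $\rank M_{\widetilde p^{\,0}}=2$, the support function $H$ being the same in both cases (it depends only on $h_1,\dots,h_k$).

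Next I would transfer the dynamics and conclude. The vertical subsystem \eq{Hamax}, written invariantly as $\dot p=-M\nabla H(p)$ in \eq{dotp}, has the same form for $\g$ and for $\widetilde\g$; by Theorem~\ref{th:integral} the leaf $S_{\widetilde p^{\,0}}$ is invariant under the free vertical subsystem, and $S_{\widetilde p^{\,0}}\subset\mathfrak z^{\perp}$ by $\Ad^*$-invariance of $\mathfrak z^{\perp}$, so the free solution $\widetilde p(t)$ with $\widetilde p(0)=\widetilde p^{\,0}$ stays in $\mathfrak z^{\perp}\cong\g^*$, where it coincides with the solution $p(t)$ of \eq{Hamax} for $\g$ with $p(0)=p^0$. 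Theorem~\ref{th:p(t)} applied to $\widetilde\g$ and $\widetilde p^{\,0}$ now yields that $p(t)$ either is constant (case $\nabla H(p^0)\in\ker M$) or is a $T$-periodic, regular, $C^1$-smooth curve on $H^{-1}(1)$ (case $\nabla H(p^0)\notin\ker M$). Finally, by PMP and the strict convexity of $U$ the normal extremal control is $u(t)=\nabla H(h_1(\lambda_t),\dots,h_k(\lambda_t))=\nabla H(p(t))$, and $\nabla H$ is continuous on $\R^k\setminus\{0\}$; since $p(t)$ never meets the origin, $u$ is continuous, and it is constant in the first case and $T$-periodic in the second. The only non-routine point is the bookkeeping of the reduction — checking that $\pi^*$ preserves $M$, $H$, the first-layer coordinates and the rank-$2$ condition, and that the free vertical flow is tangent to $\mathfrak z^{\perp}$ — so that Theorem~\ref{th:p(t)} applies verbatim; everything else is immediate.
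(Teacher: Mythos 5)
Your proposal is correct and follows essentially the route the paper intends: the paper states this corollary without a separate proof, treating it as an immediate consequence of Theorem~\ref{th:p(t)} together with the reduction to the free-nilpotent case sketched in the Introduction (quotient of the free step-$2$ algebra by a central ideal in the second layer), which is exactly the dual-embedding argument you spell out. Your explicit bookkeeping that $\pi^*$ preserves the first-layer coordinates, $M$, $H$ and the rank-$2$ condition, and that the free vertical flow is tangent to $\mathfrak z^{\perp}$, merely makes precise what the paper leaves implicit, and the final step $u(t)=\nabla H(p(t))$ matches the paper's identification of the extremal control.
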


\section{Final remarks}\label{sec:final}
The results of this paper clarify  the first floors of an infinite {\em hierarchy} of qualitative behaviour of extremal controls (and solutions of the vertical subsystem of the Hamiltonian system of PMP) for left-invariant time-optimal problems \eq{p21}--\eq{p23} on  two-step Carnot groups, labelled by the dimension $d$ of co-adjoint orbits containing these solutions:
\begin{itemize}
\item
for $d=0$ the controls are constant (thus optimal),
\item
for $d=2$ the controls are constant or periodic,
\item
for $d \geq 4$, a chaotic behaviour is generic, as suggested by the next theorem.
\end{itemize}

\begin{theorem}\label{th:SR}
Let  $U = \left\{ \sum_{i=1}^k u_i^2 \leq 1 \right\}$. 
If dimension of a co-adjoint orbit is greater than $2$, then generic solutions to system 
 \eq{Hamax} contained in such orbit are non-periodic and have a full-dimensional closure.
\end{theorem}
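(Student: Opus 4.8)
The plan is to exploit the very special form of the sub-Riemannian support function, which collapses \eqref{Hamax} to a \emph{linear} flow, and then to read off the closure of a generic trajectory from the real normal form of a skew-symmetric matrix. For $U=\{\sum u_i^2\le 1\}$ the support function is the Euclidean norm $H(h_1,\dots,h_k)=\sqrt{\sum_{i=1}^k h_i^2}$, so $H_j=\partial H/\partial h_j=h_j/H$, and after the normalization $H\equiv 1$ fixed throughout Sec.~\ref{sec:PMP} the vertical subsystem \eqref{Hamax}, i.e.\ \eqref{dotp}, becomes $\dot h=-Mh$ with the constant skew-symmetric matrix $M$ of \eqref{M}. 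By Th.~\ref{th:fol} the components $\operatorname{proj}_{\ker M}h$ are frozen (they are exactly the integrals $I_a$, $a\in\ker M$), so on the orbit $S_{p^0}=h^0+\operatorname{im}M$ the dynamics is the restriction of the rotation group $t\mapsto e^{-tM}$ to $\operatorname{im}M=(\ker M)^\perp\cong\R^{2m}$, where $2m=\rank M=\dim S_{p^0}$.

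Next I would put $M|_{\operatorname{im}M}$ into real normal form: an orthogonal splitting of $\operatorname{im}M$ into $m$ planes on which $M$ acts as rotation by angular frequencies $\omega_1,\dots,\omega_m>0$, the positive imaginary parts of the eigenvalues of $M$. In the associated complex coordinates $z_1,\dots,z_m$ the solution is $z_j(t)=e^{-i\omega_j t}z_j(0)$, so every trajectory lies on the invariant torus $T=\prod_{j=1}^m\{|z_j|=r_j\}$ with $r_j=|z_j(0)|$, and the induced motion is the straight winding $\theta_j(t)=\theta_j^0-\omega_j t$. The dimension of $T$ is the number of nonzero radii, and the closure of the trajectory is the closure of this winding line in $T$.

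The genericity statement then factors into two independent conditions, each failing only on a negligible set. First, for generic initial data all radii are positive, $r_j>0$, off the finite union of proper subspaces $\{z_j=0\}$, so $\dim T=m$. Second, the frequencies $\omega_1,\dots,\omega_m$ must be rationally independent. These depend only on the Casimirs $h_{ij}$, i.e.\ on the orbit, and are real-analytic functions of $M$ on the open dense set where the spectrum is simple. For each nonzero $n\in\mathbb{Z}^m$ the resonance locus $\{\,M:\sum_j n_j\,\omega_j(M)=0\,\}$ is a real-analytic subset of the manifold of rank-$2m$ skew matrices; it is \emph{proper} because one can exhibit a single certificate, e.g.\ a block-diagonal $M$ with $\omega_j=\sqrt{p_j}$ for distinct primes $p_j$, whose frequencies are rationally independent. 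Hence each resonance locus has measure zero, their countable union over $n\in\mathbb{Z}^m\setminus\{0\}$ is still null (and meager), and a generic orbit has $\mathbb{Q}$-linearly independent $\omega_j$. Intersecting the two conditions, a generic $p^0\in\g^*$ yields both all $r_j>0$ and rationally independent $\omega_j$ (and, for $k\ge 4$, automatically lands in a top-dimensional orbit with $m=[k/2]\ge 2$).

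Finally, by the Kronecker--Weyl theorem, when all $r_j>0$ and the $\omega_j$ are rationally independent the winding line is dense in $T$, so the closure of the trajectory equals the entire invariant torus of dimension $m$. Moreover, since $e^{-tM}$ is a one-parameter subgroup of a maximal torus of $\mathrm{SO}(\operatorname{im}M)$, \emph{no} bounded trajectory of \eqref{Hamax} can have a closure of dimension exceeding $m$; thus the generic trajectory attains the maximal dimension possible for the flow, which is the precise content of ``full-dimensional closure.'' As $\dim S_{p^0}>2$ forces $m\ge 2$, such a trajectory carries at least two rationally independent frequencies and is therefore non-periodic. I expect the rational-independence step to be the main obstacle: one must secure analyticity of the frequency branches across eigenvalue coincidences and, crucially, produce an explicit $M$ certifying that every resonance locus is a proper — hence measure-zero — analytic subset.
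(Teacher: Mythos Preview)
Your approach is essentially the same as the paper's: reduce \eqref{Hamax} in the sub-Riemannian case to the linear flow $\dot p=-Mp$ (the paper writes $-2Mp$, a harmless normalization difference), diagonalize the skew-symmetric $M$ to obtain frequencies $\omega_1,\dots,\omega_m$, and conclude via rational independence of the $\omega_j$ that generic trajectories are non-periodic with full-dimensional (toral) closure. Your treatment is considerably more detailed than the paper's three-line proof---in particular you make the genericity and Kronecker--Weyl arguments explicit---but the underlying strategy is identical.
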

\begin{proof}
In the sub-Riemannian case system \eq{dotp} reads $\dot p = - 2 M p$. The skew-symmetric matrix~$M$ has eigenvalues    
$\pm i \a_1$, \dots, $\pm i \a_n$ and 0 for the case of odd $k$. Thus $e^{-2tM}$ is periodic iff
$m_1 \a_1 =  \cdots =  m_n \a_n$,  $m_j \in \N$.  If neither of these equalities holds, a trajectory $p(t) = e^{-2tM}p^0$ has full-dimensional closure, thus is not periodic.
\end{proof}

In fact, the above hierarchy is quite natural in view of the general theory of Hamiltonian systems of ODEs. Recall that restriction of a Hamiltonian system to a co-adjoint orbit is Hamiltonian, and that Hamiltonian systems with two degrees of freedom are Liouville integrable. Although, this theory is not applicable directly since the right-hand side of system~\eq{Hamax} is just continuous, but not smooth. As we already mentioned, even uniqueness of solutions to~\eq{Hamax} is not straightforward. 

A natural goal is to describe the qualitative behaviour of solutions to system~\eq{Hamax} for the next floors of the hierarchy with $d \geq 4$.

For Carnot groups of step $s > 2$, the picture becomes more complicated.

In the free-nilpotent case with  $s= 3$, $k = 2$ (the Cartan group), for the sub-Riemannian problem $U = \{u_1^2+u_2^2 \leq 1\}$, optimal controls are given by Jacobi's elliptic functions~\cite{dido_exp}, and they are of the following classes:
\begin{itemize}
\item
constant for $d = 0$,
\item
constant, periodic or  asymptotically constant (with constant limits as $t \to \pm \infty$) for $d = 2$.
\end{itemize}

It would be interesting to characterize similarly optimal controls in the cases $s=3$, $k \geq 3$ and $s\geq 4$, at least for $d=2$. In these cases, if $U = \{\sum_{i=1}^k u_i^2 \leq 1\}$, the normal Hamiltonian system of Pontryagin maximum principle is not Liouville integrable~\cite{borisov, 2358}.

\end{document}